\theoremstyle{plain}
\newtheorem{thm}{Theorem}[section]
\newtheorem{prop}[thm]{Proposition}
\newtheorem{lemma}[thm]{Lemma}
\newtheorem{cor}[thm]{Corollary}
\theoremstyle{definition}
\newtheorem{defn}[thm]{Definition}
\theoremstyle{remark}
\xpatchcmd{\proof}{\@addpunct{.}}{\@addpunct{:}}{}{}
\let\@@pmod\pmod
\DeclareRobustCommand{\pmod}{\@ifstar\@pmods\@@pmod}
\def\@pmods#1{\mkern4mu({\operator@font mod}\mkern 6mu#1)}
\def\C{\mathbb{C}}
\def\H{\mathbb{H}}
\newcommand{\Z}{\mathbb{Z}}
\newcommand{\Q}{\mathbb{Q}}
\newcommand{\N}{\mathbb{N}}
\newcommand{\R}{\mathbb{R}}
\newcommand{\tp}{\theta_{\psi}}
\newcommand{\sdf}{\sigma^{\mathrm{sm}}_{\ell}}
\newcommand{\Dc}{\mathcal{D}}
\newcommand{\Pc}{\mathcal{P}}
\newcommand{\slz}{{\text {\rm SL}}_2(\mathbb{Z})}
\newcommand{\Id}{\mathbbm{1}}
\newcommand{\im}{\textnormal{Im}}
\DeclarePairedDelimiter\norm{\lVert}{\rVert}
\DeclarePairedDelimiter\vt{\lvert}{\rvert}
\title{Multidimensional small divisor functions}
\author{Andreas Mono}
\address{Department of Mathematics and Computer Science, Division of Mathematics, University of Cologne, Weyertal 86-90, 50931 Cologne, Germany}
\email{amono@math.uni-koeln.de}
\begin{document}

\begin{abstract}
This is a short note generalizing the construction from \cite{mamoro}, \cite{mertens2019mock} to multi-indices. We recommend to consider both references first. We obtain polar harmonic Maa{\ss} forms of non-positive integral weight if the dimension is even and greater than $2$. We provide explicit examples in dimension $4$, $6$, $8$, and $10$.
\end{abstract}

\maketitle

\section{Introduction - One-dimensional case} \label{sec:onedim}
In a recent paper \cite{mertens2019mock}, Mertens, Ono, and Rolen defined and investigated a new type of mock modular form, whose coefficients are given by a \textit{small divisor function}. We summarize their approach. As usual, we let $\tau = u+iv \in \H$ and $q \coloneqq \mathrm{e}^{2\pi i \tau}$. Let $P_{\ell}\left(\frac{n}{d},d\right) \in \Q[X,Y]$, and $\psi$, $\chi$ be Dirichlet characters of moduli $M_{\psi}$, $M_{\chi}$ respectively. We denote by $\chi_{-4}$ the unique odd Dirichlet character of modulus $4$, and we define 
\begin{align*}
D_n &\coloneqq \left\{ d \mid n \ \colon 1 \leq d \leq \frac{n}{d} \text{ and } d \equiv \frac{n}{d} \pmod*{2}  \right\}, \\
\sdf(n) &\coloneqq \sum_{d \in D_n} \chi\left(\frac{\frac{n}{d}-d}{2}\right) \psi\left(\frac{\frac{n}{d}+d}{2}\right) P_{\ell}\left(\frac{n}{d},d\right).
\end{align*}

Additionally, we require \emph{Shimura's theta-function}
\begin{align*}
\tp(\tau) \coloneqq \frac{1}{2} \sum_{n \in \Z} \psi(n) n^{\lambda_{\psi}} q^{n^2}, \qquad \lambda_{\psi} \coloneqq \frac{1-\psi(-1)}{2},
\end{align*}
and recall that 
\begin{align} \label{eq:tpspaces}
\begin{split}
\tp \in \begin{cases}
M_\frac{1}{2}(\Gamma_0(4M_\psi^2), \psi) & \text{if} \ \lambda_{\psi} = 0, \\
S_\frac{3}{2}(\Gamma_0(4M_\psi^2), \psi\cdot\chi_{-4}) & \text{if} \ \lambda_{\psi} = 1.
\end{cases}
\end{split}
\end{align}

Furthermore, we recall the definition of a harmonic Maa{\ss} form\footnote{Be aware that there is no overall convention which terminology encodes which growth condition.}.
\begin{defn}
Let $k \in \frac{1}{2}\Z$, and choose $N \in \N$ such that $4 \mid N $ whenever $k \not\in \Z$. Let $\phi$ be a Dirichlet character of modulus $N$. 
\begin{enumerate}[label=(\roman*)]
\item A \emph{weight $k$ harmonic Maa{\ss} form on a subgroup $\Gamma_0(N)$ with Nebentypus $\phi$} is any smooth function $f \colon \H \to \C$ satisfying the following three properties:
\begin{enumerate}[label=(\alph*)]
\item For all $\gamma = \left(\begin{smallmatrix} a & b \\ c & d\end{smallmatrix}\right) \in \Gamma_0(N)$ and all $\tau \in \H$ we have
\begin{align*}
f(\tau) = \left(f\vert_k\gamma\right)(\tau) \coloneqq \begin{cases}
\phi(d)^{-1}(c\tau+d)^{-k} f(\gamma\tau) & \text{if} \ k \in \Z, \\
\phi(d)^{-1}\left(\frac{c}{d}\right)\varepsilon_d^{2k}(c\tau+d)^{-k} f(\gamma\tau) & \text{if} \ k \in \frac{1}{2}+\Z,
\end{cases}
\end{align*}
where $\left(\frac{c}{d}\right)$ denotes the extended Legendre symbol, and
\begin{align*}
\varepsilon_d \coloneqq \begin{cases}
1 & \text{if} \ d \equiv 1 \pmod*{4}, \\
i & \text{if} \ d \equiv 3 \pmod*{4}. \\
\end{cases}
\end{align*}
for odd integers $d$.
\item The function $f$ is harmonic with respect to the weight $k$ hyperbolic Laplacian on $\H$, especially
\begin{align*}
0 = \Delta_k f \coloneqq \left(-v^2\left(\frac{\partial^2}{\partial u^2}+\frac{\partial^2}{\partial v^2}\right) + ikv\left(\frac{\partial}{\partial u} + i\frac{\partial}{\partial v}\right)\right) f.
\end{align*}
\item The function $f$ has at most linear exponential growth at all cusps. 
\end{enumerate}
\item A \emph{polar harmonic Maa{\ss} form} is a harmonic Maa{\ss} form with isolated poles on the upper half plane.
\end{enumerate}
\end{defn}

Then the main result of \cite{mertens2019mock} reads as follows.
\begin{thm}[\protect{\cite[Theorem 1.1]{mertens2019mock}}] \label{mormain}
Suppose that $\psi = \chi \neq \Id$, and that $P_2\left(\frac{n}{d},d\right) = d.$ Denote the corresponding small divisor function by $\sigma^{\mathrm{sm}}_{1}$, and by $E_2$ the Eisenstein series
\begin{align*}
E_2(\tau) \coloneqq 1 - 24\sum_{n \geq 1} \Big(\sum_{d \mid n} d \Big) q^n.
\end{align*}
Define
\begin{align*}
\mathcal{E}^+(\tau) &\coloneqq \frac{1}{\tp(\tau)}\left(\alpha_{\psi}E_2(\tau) + \sum_{n \geq 1} \sigma^{\mathrm{sm}}_{1}(n) q^n\right), \\
\mathcal{E}^-(\tau) &\coloneqq (-1)^{\lambda_{\psi}} \frac{(2\pi)^{\lambda_{\psi}-\frac{1}{2}}i}{8\Gamma\left(\frac{1}{2}+\lambda_{\psi}\right)} \int_{-\overline{\tau}}^{i\infty} \frac{\theta_{\overline{\psi}}(w)}{\left(-i(w+\tau)\right)^{\frac{3}{2}-\lambda_{\psi}}} dw,
\end{align*}
where $\alpha_{\psi}$ is an implicit constant depending only on $\psi$ to ensure a certain growth condition. Then the function $\mathcal{E}^+ + \mathcal{E}^-$ is a polar harmonic Maa{\ss} form of weight $\frac{3}{2}-\lambda_{\psi}$ on $\Gamma_0\left(4M_{\psi}^2\right)$ with Nebentypus $\overline{\psi} \cdot \chi_{-4}^{\lambda_{\psi}}$.
\end{thm}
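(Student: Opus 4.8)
The plan is to verify the three defining properties for the weight $k \coloneqq \frac{3}{2}-\lambda_{\psi}$ in turn, disposing of harmonicity and the pole structure first since these are the most direct. The function $\mathcal{E}^+$ is meromorphic in $\tau$: its numerator $\alpha_{\psi}E_2 + \sum_{n\geq1}\sigma^{\mathrm{sm}}_{1}(n)q^n$ and its denominator $\tp$ are holomorphic, so $\mathcal{E}^+$ is holomorphic away from the isolated zeros of $\tp$, and these zeros furnish exactly the isolated poles in $\H$ that make the form \emph{polar}. Being locally holomorphic, $\mathcal{E}^+$ is annihilated by $\Delta_k$ off the poles. For $\mathcal{E}^-$ I would recognise it as the nonholomorphic Eichler integral of $\theta_{\overline{\psi}}$, a form of weight $2-k=\frac12+\lambda_{\psi}$; a standard computation gives $\xi_k\mathcal{E}^- = c_{\psi}\,\theta_{\overline{\psi}}$ for the displayed normalising constant, whence $\Delta_k\mathcal{E}^- = -\xi_{2-k}\xi_k\mathcal{E}^- = -\xi_{2-k}(c_{\psi}\theta_{\overline{\psi}}) = 0$, as $\theta_{\overline{\psi}}$ is holomorphic. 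Thus $\Delta_k\mathcal{E}=0$ off the poles, and the shadow of $\mathcal{E}$ is a multiple of $\theta_{\overline{\psi}}$.

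For the transformation law I would not work with $\mathcal{E}$ directly, since a product of a harmonic and a holomorphic form need not be harmonic; instead I would clear the denominator, which is harmless for checking \emph{automorphy} alone. Because $\tp\in M_{1/2+\lambda_{\psi}}(\Gamma_0(4M_{\psi}^2),\psi\chi_{-4}^{\lambda_{\psi}})$ (resp. the cusp form space when $\lambda_{\psi}=1$), proving $\mathcal{E}\vert_k\gamma=\mathcal{E}$ for all $\gamma\in\Gamma_0(4M_{\psi}^2)$ is equivalent to proving that
\[
\widehat{G}\coloneqq\tp\cdot\mathcal{E}=\Big(\alpha_{\psi}E_2+\sum_{n\geq1}\sigma^{\mathrm{sm}}_{1}(n)q^n\Big)+\tp\,\mathcal{E}^-
\]
satisfies the weight-$2$ automorphy with the \emph{trivial} nebentypus. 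Indeed the character bookkeeping forces $\psi\overline{\psi}\,\chi_{-4}^{2\lambda_{\psi}}$ to be principal, and dividing a weight-$2$, trivial-nebentypus object by $\tp$ returns precisely weight $\frac32-\lambda_{\psi}$ and nebentypus $\overline{\psi}\chi_{-4}^{\lambda_{\psi}}$ (using that $\chi_{-4}$ is real). Only the automorphy of $\widehat{G}$ is at issue here; its lack of harmonicity is irrelevant because harmonicity of $\mathcal{E}$ was settled above.

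The heart of the matter is therefore the automorphy of $\widehat{G}$. Substituting $b\coloneqq\frac{n/d+d}{2}$ and $a\coloneqq\frac{n/d-d}{2}$ turns each $d\in D_n$ into a pair of integers $0\leq a<b$ with $b^2-a^2=n$ and $d=b-a$, so that
\[
\sum_{n\geq1}\sigma^{\mathrm{sm}}_{1}(n)q^n=\sum_{0\leq a<b}\psi(a)\,\psi(b)\,(b-a)\,q^{\,b^2-a^2}
\]
is an indefinite theta series of signature $(1,1)$ for the form $b^2-a^2$, of weight $2$ by the degree-one factor $b-a$. Such series are not modular on their own, but by Zwegers' theory they admit a completion obtained by smoothing the implicit sign condition with an error function, and this completion is a period integral of the associated unary theta, which I expect to match $\tp\,\mathcal{E}^-$ once the prefactor $(-1)^{\lambda_{\psi}}(2\pi)^{\lambda_{\psi}-\frac12}i/(8\Gamma(\tfrac12+\lambda_{\psi}))$ is accounted for. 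The term $\alpha_{\psi}E_2$ supplies the contribution of the degenerate boundary locus and simultaneously absorbs the quasimodular defect $E_2\vert_2\gamma-E_2=-\tfrac{6ic}{\pi(c\tau+d)}$. \textbf{The decisive and hardest step} is to show that the three modular defects — those of $\alpha_{\psi}E_2$, of the indefinite theta, and of $\tp\,\mathcal{E}^-$ — cancel, leaving $\widehat{G}$ genuinely weight-$2$ modular; this requires pinning down the exact transformation defect of the indefinite theta and verifying that the completion with its specific constant cancels it against the $E_2$-defect, which is also what fixes $\alpha_{\psi}$.

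Finally I would treat the growth condition at the cusps. As $v\to\infty$ one has $\tp(\tau)=q+O(q^4)$, so $1/\tp=q^{-1}+O(q^2)$ and $\mathcal{E}^+$ has a Laurent expansion with only finitely many negative $q$-powers, i.e.\ a principal part $P_f\in\C[q^{-1}]$, while $\mathcal{E}^-$ decays; at $\infty$ the principal part is $\alpha_{\psi}q^{-1}+\cdots$. The free constant $\alpha_{\psi}$ is then fixed so that the principal parts at \emph{all} cusps are polynomial in the local uniformiser and no genuine linear-exponential term survives — this is the growth condition referred to in the statement, and the binding constraint comes from the cusps other than $\infty$, whose expansions are controlled by the automorphy established in the previous step. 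Combining the three properties shows that $\mathcal{E}^++\mathcal{E}^-$ is a polar harmonic Maa\ss\ form of weight $\frac32-\lambda_{\psi}$ on $\Gamma_0(4M_{\psi}^2)$ with nebentypus $\overline{\psi}\chi_{-4}^{\lambda_{\psi}}$, as claimed.
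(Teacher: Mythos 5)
Your soft steps are fine, and one of them coincides with the paper: the substitution $a=\frac{n/d-d}{2}$, $b=\frac{n/d+d}{2}$, turning $\sum_{n\geq 1}\sigma^{\mathrm{sm}}_{1}(n)q^n$ into a sum over pairs $0\leq a<b$ weighted by $b-a$ on the hyperbola $b^2-a^2=n$, is exactly how the paper rewrites $f^+\tp$ (there with the roles of $\vec a,\vec b$ swapped). Likewise the reduction of automorphy to a weight-$2$, trivial-nebentypus statement for $\widehat{G}=\tp\cdot\mathcal{E}$, and harmonicity via $\Delta_k=-\xi_{2-k}\circ\xi_k$, are correct. But the decisive step is a genuine gap, and you flag it yourself. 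You appeal to ``Zwegers' theory'' for the series $\sum_{0\leq a<b}\psi(a)\psi(b)(b-a)q^{b^2-a^2}$, yet Zwegers' completion covers signature $(1,1)$ indefinite theta functions \emph{without} polynomial insertions, i.e.\ weight $1$; the degree-one factor $b-a$ puts you in weight $2$, where one needs a Vign\'eras-type completion (replacing the sign function by a solution of the relevant differential equation), and one must then actually verify (i) that the resulting nonholomorphic correction factors as a unary theta times an incomplete-Gamma series and equals $\tp\,\mathcal{E}^-$ \emph{with the precise constant} $(-1)^{\lambda_{\psi}}(2\pi)^{\lambda_{\psi}-\frac{1}{2}}i/\bigl(8\Gamma(\tfrac{1}{2}+\lambda_{\psi})\bigr)$, and (ii) that the residual defect is exactly a multiple of $E_2\vert_2\gamma-E_2$, which is what determines $\alpha_{\psi}$. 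You assert both as expectations (``which I expect to match''); they are the entire content of the theorem. There is also an internal inconsistency in your handling of $\alpha_{\psi}$: you fix it once in the defect cancellation and then again by demanding polynomial principal parts at the other cusps. It cannot be fixed twice, and since the definition of a polar harmonic Maa{\ss} form already permits principal parts at cusps, the growth condition alone would not pin it down; the constant is forced by the modularity (equivalently, in the paper's framework, by the regularization of the weight-$2$ holomorphic projection).

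For comparison, the route the paper describes (steps (I)--(III) of Section 1) never computes a transformation defect at all: one shows the regularized weight-$2$ holomorphic projection $\pi_{2}(\mathcal{E}\tp)$ vanishes --- via Lipschitz summation, the Jacobi-polynomial proposition, and the same $(a,b)$-rewriting of $f^+\tp$ --- then computes $\xi_{2}(\mathcal{E}\tp)$, which is visibly modular of weight $0$, and finally invokes \cite[Proposition 2.3]{mertens2019mock} to convert ``vanishing projection plus modular shadow'' into modularity of $\mathcal{E}\tp$; the $E_2$ term and the value of $\alpha_{\psi}$ fall out of the subtleties of the $\pi_2$ case rather than being matched against an indefinite-theta anomaly. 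If you carried out the Vign\'eras completion and the explicit cancellation, your argument would constitute a genuinely different proof; as written, the heart of it is conjectured rather than proved.
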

In analogy to the classical divisor sums $\sigma_k(n)$, Mertens, Ono, and Rolen called their function $\mathcal{E}^+$ a \textit{mock modular Eisenstein series with Nebentypus}. Furthermore, they related their result to partition functions for special choices of $\psi$, and proved a $p$-adic property of $\mathcal{E}^+$, compare \cite[Corollary 1.3, Theorem 1.4]{mertens2019mock}.
\newline
\newline
\indent In \cite{mamoro}, Males, Rolen, and the author discovered another example of a polar harmonic Maa{\ss} form adapting the construction from \cite{mertens2019mock}.
\begin{thm}[\protect{\cite[Theorem 1.1, Theorem 1.3]{mamoro}}] \label{mmrmain}
Suppose that $\psi$ is odd, $\chi$ is even, and that $P_2\left(\frac{n}{d},d\right) = d^2$. Denote the corresponding small divisor function by $\sigma^{\mathrm{sm}}_{2}$, and define
\begin{align*}
\mathcal{F}^+(\tau) &\coloneqq \frac{1}{\tp(\tau)} \cdot \begin{cases}
\sum_{n \geq 1} \sigma^{\mathrm{sm}}_{2}(n) q^n & \text{if } \chi \neq \Id, \\
\smallskip
\frac{1}{2}\sum_{n \geq 1}\psi(n)n^2 q^{n^2} + \sum_{n \geq 1} \sigma^{\mathrm{sm}}_{2}(n) q^n & \text{if } \chi = \Id,
\end{cases} \\
\mathcal{F}^-(\tau) &\coloneqq\frac{i}{\pi \sqrt{2}} \int_{-\overline{\tau}}^{i\infty} \frac{\theta_{\chi}(w)}{\left(-i(w+\tau)\right)^{\frac{3}{2}}} dw.
\end{align*}
\begin{enumerate}[label=(\roman*)]
\item If $\chi \neq \Id$ then the function $\mathcal{F}^+ + \mathcal{F}^-$ is a polar harmonic Maa{\ss} form of weight $\frac{3}{2}$ on $\Gamma_0\left(4M_{\chi}^2\right) \cap \Gamma_0\left(4M_{\psi}^2\right)$ with Nebentypus $\overline{\chi} \cdot \left(\psi \cdot \chi_{-4}\right)^{-1}$.
\item If $\chi = \Id$ then the function $\mathcal{F}^+ + \mathcal{F}^-$ is a polar harmonic Maa{\ss} form of weight $\frac{3}{2}$ on $\Gamma_0\left(4M_{\psi}^2\right)$ with Nebentypus $\left(\psi \cdot \chi_{-4}\right)^{-1}$.
\end{enumerate} 
\end{thm}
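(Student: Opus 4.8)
The plan is to mirror the strategy behind Theorem~\ref{mormain}, using that the weight-$\frac32$ assertion becomes, after multiplication by $\tp$, a weight-$3$ statement in which the modular and the harmonic analyses decouple. First I would rewrite the numerator as an indefinite theta series. For an admissible pair $\left(d,\frac nd\right)$ put $d=b-a$ and $\frac nd=b+a$ with $b>a\ge0$; then the congruence $d\equiv\frac nd\pmod*2$ is automatic, $n=b^2-a^2$, and since $P\left(\frac nd,d\right)=d^2=(b-a)^2$ one obtains
\begin{align*}
\sum_{n\ge1}\sigma^{\mathrm{sm}}_{2}(n)\,q^n=\sum_{b>a\ge0}\chi(a)\,\psi(b)\,(b-a)^2\,q^{b^2-a^2}.
\end{align*}
This is the signature-$(1,1)$ theta series attached to the quadratic form $Q(a,b)=b^2-a^2$ with the homogeneous degree-$2$ weighting $(b-a)^2$ (degree $2$ on the positive cone, degree $0$ on the negative one, so mixed weight $\left(\frac52,\frac12\right)$, i.e. total weight $3$), and the cone $b>a\ge0$ is one component of $\{Q>0\}$. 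This is precisely the setting in which the series, although it converges as a holomorphic $q$-expansion, fails to be modular and admits a Zwegers-type non-holomorphic completion.

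Second, I would invoke that completion theory: replacing the sharp cone cutoff by a complementary-error-function smoothing produces a real-analytic function $\widehat N$ of weight $3$, and the content is that the correction term it adds is exactly $\tp\cdot\mathcal F^-$, so that $\widehat N=\tp\left(\mathcal F^++\mathcal F^-\right)$. I would check this by the standard period-integral manipulation: differentiating $\mathcal F^-$ in $\overline\tau$ recovers $\theta_\chi$ up to the stated constant (so $\theta_\chi$ is its shadow under $\xi_{3/2}$), while the transformation of $\mathcal F^-$ on $\Gamma_0\left(4M_\chi^2\right)$ is governed by the weight-$\frac12$ transformation of $\theta_\chi$ together with the effect of the path change $-\overline\tau\mapsto-\overline{\gamma\tau}$ on the integral. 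Granting that $\widehat N$ is weight-$3$ modular on $\Gamma_0\left(4M_\chi^2\right)\cap\Gamma_0\left(4M_\psi^2\right)$ with Nebentypus $\overline\chi$, I divide by $\tp$, which satisfies $\left(\tp\vert_{\frac32}\gamma\right)=\left(\psi\cdot\chi_{-4}\right)(\gamma)\,\tp$: this is legitimate since $\tp\not\equiv0$, and it yields that $\mathcal F^++\mathcal F^-$ transforms in weight $\frac32$ with Nebentypus $\overline\chi\cdot\left(\psi\cdot\chi_{-4}\right)^{-1}$ on the same group, exactly as in (i), with (ii) the specialization $M_\chi=1$, $\overline\chi=\Id$.

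Third, harmonicity and growth. Because $\mathcal F^+$ is meromorphic we have $\xi_{3/2}\mathcal F^+=0$, hence $\Delta_{3/2}\mathcal F^+=0$ away from the zeros of $\tp$, while $\mathcal F^-$ is annihilated by $\Delta_{3/2}$ as a non-holomorphic Eichler integral; thus $\mathcal F^++\mathcal F^-$ is harmonic wherever $\tp\neq0$, and its only poles in $\H$ are the (isolated) zeros of $\tp$, which is the source of the adjective \emph{polar}. For the cusps I would separate the two cases exactly as stated: if $\chi\neq\Id$ then $\theta_\chi$ is cuspidal, so $\mathcal F^-$ decays and the only principal parts come from $1/\tp$, giving a polar harmonic Maa{\ss} form; if $\chi=\Id$ the nonzero constant term of $\theta_{\Id}$ forces linear-exponential growth of $\mathcal F^-$ at the cusps, so one obtains only a polar harmonic \emph{weak} Maa{\ss} form, and the extra summand $\frac12\sum_{n\ge1}\psi(n)n^2q^{n^2}$ in $\mathcal F^+$ is precisely the diagonal $a=0$ contribution that must be restored because $\Id(0)=1$.

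The main obstacle is the second step: establishing that the non-holomorphic completion of the indefinite theta series coincides on the nose with $\tp\cdot\mathcal F^-$, and that $\widehat N$ is genuinely weight-$3$ modular with Nebentypus $\overline\chi$. Concretely this means matching the smoothing error term to the given period integral and correctly tracking the half-integral-weight multiplier system (the factors $\left(\frac cd\right)$ and $\varepsilon_d$) through both the theta transformation formula and the path change $-\overline\tau\mapsto-\overline{\gamma\tau}$. Once this identification and the weight-$3$ modularity of $\widehat N$ are secured, the remaining verifications are bookkeeping.
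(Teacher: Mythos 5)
Your proposal is correct in outline, but it runs on a genuinely different engine than the proof the paper describes (steps (I)--(III) of Section 1, following \cite{mamoro}): nowhere does that argument complete an indefinite theta series. Instead, one multiplies by $\tp$ to land in weight $\kappa=3$ and shows the two halves annihilate each other under holomorphic projection, $\pi_3\bigl((\mathcal{F}^+ + \mathcal{F}^-)\tp\bigr)=0$: here \cite[Proposition 1.7]{mamoro} (Lipschitz summation) evaluates $\pi_3\bigl(\mathcal{F}^-\tp\bigr)$, and for $k_f=\frac32$, $\kappa=3$, $m=s^2$, $n=t^2$ the bracket $n^{1/2}\,\Pc_{1}^{(-\frac12,-2)}\bigl(1-2\tfrac{m}{n}\bigr)-m^{1/2}$ collapses to $\frac{(t-s)^2}{2t}$, which under your very substitution $d=t-s$, $\frac{n}{d}=t+s$ is exactly what forces $P=d^2$; then $\xi_3\bigl((\mathcal{F}^+ + \mathcal{F}^-)\tp\bigr)$ is computed to be a constant times $v^{3/2}\,\theta_{\overline{\chi}}(\tau)\,\overline{\tp(\tau)}$, visibly modular of weight $-1$; finally \cite[Proposition 2.3]{mertens2019mock} (vanishing holomorphic projection plus modular $\xi$-image implies modularity) yields weight-$3$ modularity of the product, and one divides by $\tp$. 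Your endgame (harmonicity away from the zeros of $\tp$, the cusp dichotomy) matches the paper's. The trade-off: the paper's route inherits modularity from the \emph{known} modularity of the shadow, so the half-integral multiplier and path-change bookkeeping you single out as your main obstacle never has to be confronted, and the method generalizes verbatim with $\tp^{\ell}$ in place of $\tp$ (which is the point of this note); your route, if carried out, would place $\tp\mathcal{F}^+$ conceptually inside Zwegers' framework, with the completion identity exhibited directly rather than emerging from a projection computation. Be aware, though, that classical signature-$(1,1)$ completion theory (no polynomial insertion) does not cover the weighting $(b-a)^2$: this is the square of a \emph{null} linear form, not aligned with any majorant, so your mixed-weight $\bigl(\frac52,\frac12\bigr)$ bookkeeping is heuristic, and you would need Vign\'eras-type criteria or generalized error functions (note that $(b-a)^2$ is homogeneous of degree $2$ and killed by $\partial_b^2-\partial_a^2$, whence weight $1+2=3$), after which matching the correction term to $\tp\mathcal{F}^-$ is essentially the whole theorem --- so your second step carries all the weight, as you concede. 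Two small inaccuracies: in case (ii) the constant term of $\theta_{\Id}$ produces non-decaying power-of-$v$ terms in $\mathcal{F}^-$ (already enough to downgrade to \emph{weak}), not literal linear exponential growth at $i\infty$; and the $\frac12$ in the extra diagonal summand reflects the boundary weighting of the symmetrized cone $b>\vert a\vert$ rather than being a consequence of $\Id(0)=1$.
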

Moreover, if $\psi = \chi_{-4}$, $\chi = \Id$ , Males, Rolen and the author related $\mathcal{F}^+$ to Hurwitz class numbers, and proved a $p$-adic property of $\mathcal{F}^+ $ in both cases of $\chi$ as well, compare \cite[Corollary 1.6, Theorem 1.8]{mamoro}.
\newline
\newline
\indent The proof of Theorem \ref{mormain} and \ref{mmrmain} is performed in three main steps. To describe them, we let
\begin{align*}
\Gamma(s,z) \coloneqq \int_z^{\infty} t^{s-1}\mathrm{e}^{-t} dt,
\end{align*}
be the \emph{incomplete Gamma function}, which is defined for $\mathrm{Re}(s) > 0$ and $z \in \C$. It can be analytically continued in $s$ via the functional equation
\begin{align*}
\Gamma(s+1,z) = s\Gamma(s,z) + z^{s}\mathrm{e}^{-z},
\end{align*}
and has the asymptotic behavior
\begin{align*}
\Gamma(s,v) \sim v^{s-1}\mathrm{e}^{-v}, \quad \vert v \vert \to \infty 
\end{align*}
for $v \in \R$. In addition, let 
\begin{align*}
\xi_{\kappa} \coloneqq 2iv^{\kappa}\overline{\frac{\partial}{\partial\overline{\tau}}} = iv^{\kappa}\overline{\left(\frac{\partial}{\partial u} + i\frac{\partial}{\partial v}\right)}
\end{align*}
be the \emph{Bruinier--Funke operator of weight $\kappa$}, and
\begin{align*}
\pi_{\kappa}f (\tau) \coloneqq \frac{(\kappa-1) (2i)^{\kappa}}{4\pi} \int_{\H} \frac{f\left(x+iy\right)y^{k}}{\left(\tau-x+iy\right)^{\kappa}} \frac{dxdy}{y^2},
\end{align*}
be the \emph{weight $\kappa$ holomorphic projection operator}, whenever $f$ is translation invariant, and the integral converges absolutely.

Moreover, we let
\begin{align*}
g(\tau) &\coloneqq \sum_{n \geq 1} \beta\left(n\right)q^{n}, \qquad f^+(\tau) \coloneqq \frac{1}{g(\tau)} \sum_{n \geq 1} \sdf(n)q^n, \\
f^-(\tau) &\coloneqq \sum_{m \geq 1} \alpha\left(m\right)m^{k_{f}-1}\Gamma\left(1-k_{f},4\pi m v\right)q^{-m}, \qquad f(\tau) \coloneqq (f^+ +f^-)(\tau).
\end{align*}

Then we proceed as follows.
\begin{enumerate}[label=(\Roman*)]
\item Show that
\begin{align*}
\pi_{\kappa}\left(fg\right)(\tau) = 0.
\end{align*}
To this end, we rewrite the definition of the given non-holomorphic part (see \cite[Lemma 4.1]{mamoro} for instance), and next we utilize the following result. Here and throughout, $\Pc_r^{(a,b)}$ denotes the \emph{Jacobi polynomial} of degree $r$ and parameter $a$, $b$, which we introduce in Section \ref{sec:jac}.
\begin{prop}[\protect{\cite[Proposition 1.7, Corollary 4.2]{mamoro}}] \label{prop:hpcalc}
Let $k_{f} \in \R \setminus \N$, $k_g \in \R \setminus \left(-\N\right)$, such that $\kappa \coloneqq k_{f}+k_g \in \Z_{\geq 2}$. Let $\alpha(m)$, $\beta(n)$ be two complex sequences, and define the functions $f$, $g$ as above. Suppose that 
\begin{enumerate}
\item the function $(fg)(r+iv)$ grows at most polynomially as $v \searrow 0$, where $r \in \Q$, and that
\item the function $(fg)(iv)$ grows at most polynomially as $v \nearrow \infty$.
\end{enumerate}
Then the weight $\kappa$ holomorphic projection of $f^-g$ is given by
\begin{align*}
\pi_{\kappa}\left(f^-g\right)(\tau) &= -\Gamma(1-k_{f}) \sum_{m \geq 1} \sum_{n-m \geq 1} \alpha\left(m\right) \beta\left(n\right) \\
& \hspace{3em} \times \left(n^{k_{f}-1}\Pc_{\kappa-2}^{(1-k_{f},1-\kappa)}\left(1-2\frac{m}{n}\right)-m^{k_{f}-1}\right)q^{n-m}.
\end{align*}
Furthermore, it holds that $\pi_{\kappa}\left(f^+g\right)(\tau) = \left(f^+ g\right)(\tau)$.
\end{prop}
In addition, the holomorphic part $f^+ g$ has to be rewritten as well, see the proof of Theorem \ref{mormain} in \cite[Section 4]{mamoro}.

\item We compute
\begin{align*}
\xi_{\kappa} \left(fg\right)(\tau) = -(4\pi)^{1-k_f} v^{k_{g}} \left(\sum_{m \geq 1} \overline{\alpha(m)} q^m \right) \overline{g(\tau)},
\end{align*}
and choose the coefficients $\alpha\left(m\right)$, $\beta\left(n\right)$, such that this function is modular of weight $2-\kappa$.

\item Conclude that $fg$ is modular of weight $\kappa$ by the following result.
\begin{prop}[\protect{\cite[Proposition 2.3]{mertens2019mock}}]
Let $h \colon \H \to \C$ be a translation invariant function such that $\vert h(\tau) \vert v^{\delta}$ is bounded on $\H$ for some $\delta > 0$. If the weight $k$ holomorphic projection of $h$ vanishes identically for some $k > \delta + 1$ and $\xi_k h$ is modular of weight $2-k$ for some subgroup $\Gamma < \slz$, then $h$ is modular of weight $k$ for $\Gamma$.
\end{prop}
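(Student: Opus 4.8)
The plan is to show that $h$ is invariant under the weight $k$ slash operator for every $\gamma \in \Gamma$ by proving that the modularity defect $\phi_\gamma \coloneqq h\vert_k\gamma - h$ vanishes identically. The first step is to observe that each $\phi_\gamma$ is \emph{holomorphic}. Indeed, the Bruinier--Funke operator intertwines the slash operators via $\xi_k(h\vert_k\gamma) = (\xi_k h)\vert_{2-k}\gamma$, so that
\begin{align*}
\xi_k\phi_\gamma = (\xi_k h)\vert_{2-k}\gamma - \xi_k h = 0,
\end{align*}
where the last equality is exactly the assumed modularity of $\xi_k h$ in weight $2-k$ for $\Gamma$. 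Since the kernel of $\xi_k$ consists precisely of the holomorphic functions, every $\phi_\gamma$ is holomorphic. (One also records the cocycle relation $\phi_{\gamma_1\gamma_2} = \phi_{\gamma_1}\vert_k\gamma_2 + \phi_{\gamma_2}$, which organizes the computation but is not strictly needed.)

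Next I would secure the growth control that makes holomorphic projection available. From the boundedness of $\vert h(\tau)\vert v^{\delta}$ together with $\mathrm{Im}(\gamma\tau) = v\vert c\tau+d\vert^{-2}$ one obtains
\begin{align*}
\vert (h\vert_k\gamma)(\tau)\vert \ll \vert c\tau+d\vert^{2\delta-k}\, v^{-\delta},
\end{align*}
so that in the real direction the integrand defining $\pi_k$ decays fast enough for the hypothesis $k > \delta+1$ to guarantee absolute convergence of the defining integral both for $h$ and for $h\vert_k\gamma$; the same estimate shows $\phi_\gamma(\tau) \to 0$ as $v \to \infty$.

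The decisive step is the equivariance of holomorphic projection. Because the reproducing kernel $(\tau - \overline{z})^{-k}$ and the invariant measure $y^{-2}\,dx\,dy$ transform compatibly under $z \mapsto \gamma z$, a change of variables in the absolutely convergent integral defining $\pi_k h$ yields the commutation relation $\pi_k(h\vert_k\gamma) = (\pi_k h)\vert_k\gamma$, and combining this with $\pi_k h = 0$ gives $\pi_k(h\vert_k\gamma) = 0$. Since $\pi_k$ acts as the identity on holomorphic functions, applying it to the holomorphic function $\phi_\gamma$ and using linearity gives
\begin{align*}
\phi_\gamma = \pi_k\phi_\gamma = \pi_k(h\vert_k\gamma) - \pi_k h = 0.
\end{align*}
As $\gamma \in \Gamma$ was arbitrary, $h$ is modular of weight $k$ for $\Gamma$.

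I expect the main obstacle to be the commutation relation $\pi_k(h\vert_k\gamma) = (\pi_k h)\vert_k\gamma$: unlike the familiar statement that the holomorphic projection of a modular form is again modular, here $h$ is \emph{not} assumed modular, so the identity must be established directly at the level of the defining integral, and one must check that the substitution introduces no boundary contributions and that the absolute convergence secured by $k > \delta + 1$ survives it. A secondary technical point is the bookkeeping of the automorphy multiplier when $k \in \frac{1}{2}+\Z$; this is carried identically through the intertwining property of $\xi_k$, the equivariance of $\pi_k$, and the reproducing property, so that the argument applies verbatim to the weights $\frac{3}{2}-\lambda_{\psi}$ relevant to Theorems \ref{mormain} and \ref{mmrmain}.
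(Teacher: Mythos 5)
Your proposal is correct and follows essentially the same route as the source: the present paper only quotes this proposition from \cite{mertens2019mock} without reproving it, and the proof there is exactly your argument --- $h\vert_k\gamma - h$ is holomorphic because $\xi_k$ intertwines the weight-$k$ and weight-$(2-k)$ slash actions, and it is then annihilated by the slash-equivariant holomorphic projection, which reproduces it. The one slogan to sharpen is that ``$\pi_k$ acts as the identity on holomorphic functions'' holds only for those decaying as $v \to \infty$ (it kills constants, for instance), but your estimate showing $\phi_\gamma(\tau) \to 0$ as $v \to \infty$ supplies precisely this, so the point is cosmetic.
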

The subtle growth conditions are required to include the case $\pi_2$, and are clearly satisfied if we deal with higher weight holomorphic projections, in which case the integral defining $\pi_k$ converges absolutely. 

Lastly, verify harmonicity and the growth property towards the cusps required by the definition of a harmonic Maa{\ss} form.
\end{enumerate}

Finally, we mention one remark from \cite[p.\ 5]{mamoro}, which states that there are more choices of half integral parameters $k_f$, $k_g$, which lead to other choices of polynomials $P_{\ell}\left(\frac{n}{d},d\right)$ in the definition of $\sdf$, such that step (I) above works. 
\newline
\newline
\indent We refer to the first two sections of \cite{mamoro} for more details, and for overall preliminaries introducing the aforementioned objects together with their key properties.

\section{Statement of the result}
We arrive at the following result by combining the lemmas from the Section \ref{sec:multidim} as outlined during Section \ref{sec:onedim}. The functions  $\sdf$ and $f_{\ell}$ are defined at the beginning of Section \ref{sec:multidim}.

\begin{thm} \label{thm:main}
Let $\psi$ be an odd Dirichlet character, $\chi$ be an even and non-trivial Dirichlet character. Let $\ell \in 2\N+2$. Define $P_{\ell}$ as indicated in Corollay \ref{cor:cond}, obtaining the corresponding small divisor function $\sdf$. Then the resulting function $f_{\ell}$ is a polar harmonic Maa{\ss} form of weight $2-\frac{\ell}{2} \in -\N_0$ on $\Gamma_0(4M_{\chi}^2) \cap \Gamma_0(4 M_{\psi}^2)$ with Nebentypus $\overline{\chi} \cdot \left(\psi \cdot \chi_{-4}\right)^{-1}$. Its shadow is given by a non-zero constant multiple of $\theta_{\overline{\chi}}^{\ell}$. 
\end{thm}

In other words, the technique presented in \cite{mamoro}, \cite{mertens2019mock} applies straightforward in higher even dimensions, except for dimension two. We plan to find and investigate applications of $f_{\ell}$ to other areas of number theory, such as combinatorics, as in the one--dimensional case \cite[Corollary 1.3]{mertens2019mock}.

\section*{Acknowledgement} 
We would like to thank the anonymous referee for many valuable comments on an earlier version of this paper.

\section{Multidimensional Case} \label{sec:multidim}
We fix $\ell \in \N$ throughout. Let $\vec{n} = \left(n_1,\ldots,n_{\ell}\right) \in \N^{\ell}$. We recall the usual multi-index conventions
\begin{align*}
\vec{n}! \coloneqq n_1 n_2 \cdots n_{\ell}, \quad \vt{\vec{n}} \coloneqq n_1 + \ldots + n_{\ell}, \quad \norm{\vec{n}} \coloneqq \sqrt{n_1^2 + \ldots + n_{\ell}^2}.
\end{align*}

We let $\psi \neq \Id$, and consider
\begin{align*}
\tp(\tau)^{\ell} = \sum_{\vec{n} \in \N^{\ell}} \psi\left(\vec{n}!\right)\left(\vec{n}!\right)^{\lambda_{\psi}}q^{\norm{\vec{n}}^2}.
\end{align*}

Moreover, we relax our assumption to $P_{\ell} \in \Q(X,Y)$, and we let
\begin{align*}
\Dc_{\vec{n}} &\coloneqq \bigtimes_{j=1}^{\ell} D_{n_j} \\
&= \left\{ \vec{d} \in \N^{\ell} \ \colon d_j \mid n_j, \ 1 \leq d_j \leq \frac{n_j}{d_j} \text{ , and } d_j \equiv \frac{n_j}{d_j} \pmod*{2} \quad \forall \ 1 \leq j \leq \ell \right\} \\
\sdf\left(\vec{n}\right) &\coloneqq \sum_{\vec{d} \in \Dc_{\vec{n}}} \left\{ \prod_{j=1}^{\ell} \chi\left(\frac{\frac{n_j}{d_j}-d_j}{2}\right) \psi\left(\frac{\frac{n_j}{d_j}+d_j}{2}\right)\left(\frac{\frac{n_j}{d_j}-d_j}{2}\right)^{\lambda_{\chi}} \left(\frac{\frac{n_j}{d_j}+d_j}{2}\right)^{\lambda_{\psi}} \right\} \\
& \hspace{4em} \times P_{\ell}\left(\norm{\left(n_j / d_j \right)_{1\leq j\leq \ell}}^2,\norm{\vec{d}}^2\right).
\end{align*}

Consequently,
\begin{align*}
f_{\ell}^+(\tau) &\coloneqq \frac{1}{\tp(\tau)^{\ell}} \sum_{\vec{n} \in \N^{\ell}} \sdf\left(\vec{n}\right) q^{\vt{\vec{n}}}, \\
f_{\ell}^-(\tau) &\coloneqq \frac{1}{\Gamma\left(1-k_{f_{\ell}}\right)} \sum_{\vec{m} \in \N^{\ell}} \chi\left(\vec{m}!\right)\left(\vec{m}!\right)^{\lambda_{\chi}}\norm{\vec{m}}^{2(k_{f_{\ell}}-1)}\Gamma(1-k_{f_{\ell}},4\pi \norm{\vec{m}}^2 v)q^{-\norm{\vec{m}}^2}, \\
f_{\ell}(\tau) &\coloneqq (f_{\ell}^+ + f_{\ell}^-)(\tau).
\end{align*}

We insert this setting into the constructive method described in the first section, and devote a subsection to each step.

\subsection{First step}
We verify that the first step continues to hold due to exactly the same proofs as in \cite[Section 3]{mamoro}. We have to be careful regarding the summation conditions, which are determined one step after the application of the Lipschitz summantion formula. Explicitly, we obtain
\begin{align*}
\pi_{\kappa}\left({f_{\ell}^-}\tp^{\ell}\right)(\tau) &= -\sum_{r \geq 1}\sum_{\substack{\vec{m},\vec{n} \in \N^{\ell} \\ \norm{\vec{n}}^2-\norm{\vec{m}}^2 = r}} \chi\left(\vec{m}!\right)\left(\vec{m}!\right)^{\lambda_{\chi}} \psi\left(\vec{n}!\right) \left(\vec{n}!\right)^{\lambda_{\psi}} \\
& \hspace{2em} \times \left(\norm{\vec{n}}^{2(k_{f_{\ell}}-1)}\Pc_{\kappa-2}^{(1-k_{f_{\ell}},1-\kappa)}\left(1-2\frac{\norm{\vec{m}}^2}{\norm{\vec{n}}^2}\right)-\norm{\vec{m}}^{2(k_{f_{\ell}}-1)}\right)q^{r}.
\end{align*}

To match this expression with $f_{\ell}^+ g$, we rewrite the small divisor function. We substitute
\begin{align*}
\vec{a} \coloneqq \left(\frac{\frac{n_1}{d_1}+d_1}{2},\ldots,\frac{\frac{n_{\ell}}{d_{\ell}}+d_{\ell}}{2}\right), \qquad \vec{b} \coloneqq \left(\frac{\frac{n_1}{d_1}-d_1}{2},\ldots,\frac{\frac{n_{\ell}}{d_{\ell}}-d_\ell}{2}\right),
\end{align*}
from which we deduce 
\begin{align*}
\vec{d} = \vec{a}-\vec{b}, \qquad \vec{a}+\vec{b} = \left(n_j / d_j \right)_{1\leq j\leq \ell}, \qquad \vt{n} = \norm{\vec{a}}^2-\norm{\vec{b}}^2.
\end{align*}

Thus,
\begin{align*}
f_{\ell}^+\tp^{\ell}(\tau) &= \sum_{\vec{b} \in \N^{\ell}} \sum_{\vec{a}-\vec{b} \in \N^{\ell}} \chi\left(\vec{b}!\right)\left(\vec{b}!\right)^{\lambda_{\chi}} \psi\left(\vec{a}!\right) \left(\vec{a}!\right)^{\lambda_{\psi}} \ P_{\ell}\left(\norm{\vec{a}+\vec{b}},\norm{\vec{a}-\vec{b}}\right) q^{\norm{\vec{a}}^2-\norm{\vec{b}}^2}.
\end{align*}

We transform the summation condition.
\begin{lemma} \label{lem:rewrite}
We have
\begin{align*}
f_{\ell}^+\tp^{\ell}(\tau) = \sum_{r \geq 1}\sum_{\substack{\vec{m},\vec{n} \in \N^{\ell} \\ \norm{\vec{n}}^2-\norm{\vec{m}}^2 = r}} \chi\left(\vec{m}!\right)\left(\vec{m}!\right)^{\lambda_{\chi}} \psi\left(\vec{n}!\right) \left(\vec{n}!\right)^{\lambda_{\psi}} \ P_{\ell}\left(\norm{\vec{m}+\vec{n}},\norm{\vec{m}-\vec{n}}\right) q^{r}.
\end{align*}
\end{lemma}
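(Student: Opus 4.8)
The plan is to derive the identity by reindexing the double sum displayed immediately above the statement; no new analytic ingredient is needed, and the whole argument reduces to a bookkeeping of summation ranges. I would rename the two running multi-indices according to their sizes, writing $\vec{n}$ for the larger vector $\vec{a}$ and $\vec{m}$ for the smaller vector $\vec{b}$. Since $\vec{a}-\vec{b}\in\N^{\ell}$ forces $\norm{\vec{a}}>\norm{\vec{b}}$, the exponent $\norm{\vec{a}}^2-\norm{\vec{b}}^2=\norm{\vec{n}}^2-\norm{\vec{m}}^2$ is automatically a positive integer; I would set it equal to $r$, pull out an outer sum $\sum_{r\geq 1}$, and record the remaining restriction as the single scalar condition $\norm{\vec{n}}^2-\norm{\vec{m}}^2=r$.

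Two elementary points then have to be checked for the summand. The polynomial factor is unaffected by the relabeling: $\norm{\cdot}$ is an even function, so $P_{\ell}(\norm{\vec{a}+\vec{b}},\norm{\vec{a}-\vec{b}})=P_{\ell}(\norm{\vec{m}+\vec{n}},\norm{\vec{m}-\vec{n}})$ irrespective of which index is called $\vec{m}$ and which $\vec{n}$. For the two character factors I would carefully track which multi-index carries $\chi(\vec{a}!)(\vec{a}!)^{\lambda_{\chi}}$ and which carries $\psi(\vec{b}!)(\vec{b}!)^{\lambda_{\psi}}$; this is routine but must be matched against the placement recorded in the statement, paying attention that the assignment of $\chi$ and $\psi$ is read off correctly after the indices have been swapped. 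Throughout, any term with a vanishing entry drops out, because $\psi\neq\Id$ gives $\psi(0)=0$, so the admissible range is genuinely $\vec{m},\vec{n}\in\N^{\ell}$ with all entries positive.

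The step I expect to be the true obstacle is the transformation of the summation condition itself. The divisor parametrization produces pairs satisfying the componentwise inequality $\vec{a}-\vec{b}\in\N^{\ell}$, whereas the right-hand side ranges over pairs subject only to the scalar condition $\norm{\vec{n}}^2-\norm{\vec{m}}^2=r$. For $\ell=1$ these two conditions literally coincide, which is precisely why the one-dimensional arguments of \cite{mamoro} carry over verbatim; for $\ell\geq 2$ the componentwise ordering is strictly stronger than the ordering of the norms, and reconciling the two is the heart of the matter. I would therefore devote the bulk of the proof to showing that, once the small divisor function is expanded over all admissible divisor tuples and the result is regrouped by the level $r$, the contributions reassemble exactly into the norm-ordered sum on the right-hand side, since it is this norm-ordered shape that has to be matched against $\pi_{\kappa}(f_{\ell}^-\tp^{\ell})$ in the first step of the construction.
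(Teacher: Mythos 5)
Your preparatory steps (relabeling $\vec{a},\vec{b}$ as the norm-larger and norm-smaller vectors, grouping by $r = \norm{\vec{n}}^2-\norm{\vec{m}}^2$, the invariance of the $P_{\ell}$-factor under $\norm{\vec{a}-\vec{b}} = \norm{\vec{m}-\vec{n}}$, and discarding zero entries via $\psi(0)=0$) all match the routine part of the paper's argument. But the decisive step --- the one you yourself single out as ``the true obstacle'' --- is never carried out: you announce that you ``would devote the bulk of the proof'' to showing that the componentwise-ordered, divisor-parametrized sum reassembles into the norm-ordered sum, and then give no mechanism for this. That reassembly is the entire content of the lemma, so as written your proposal is a plan, not a proof. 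The paper closes exactly this gap with one specific observation that you never invoke: for every pair $(\vec{a},\vec{b})$ occurring in the expansion of $f_{\ell}^+\tp^{\ell}$, each component $n_j = (a_j+b_j)(a_j-b_j)$ is a \emph{positive integer} by the divisor parametrization, and $a_j+b_j$ is always positive; hence componentwise one gets $a_j - b_j \geq 1$, while conversely componentwise positivity gives
\begin{align*}
\norm{\vec{a}}^2-\norm{\vec{b}}^2 = \sum_{j=1}^{\ell}(a_j+b_j)(a_j-b_j) \geq 1,
\end{align*}
so on the index set actually in play the two orderings coincide. Your instinct that for $\ell \geq 2$ the scalar condition is strictly weaker than the componentwise one (e.g.\ $\vec{n}=(3,3)$, $\vec{m}=(4,1)$ satisfies the norm condition but not the componentwise one) is well founded, and it is precisely the point the paper's converse handles by appealing to the integrality of each $n_j$ ``by definition of $f^+$''; but you supply neither this argument nor any substitute, so the proof cannot be completed from what you have written.

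A second unresolved point: you defer the character bookkeeping as ``routine,'' but under the naive relabeling $\vec{a}\mapsto\vec{n}$, $\vec{b}\mapsto\vec{m}$ dictated by your own size convention, the factor $\chi\left(\vec{a}!\right)\left(\vec{a}!\right)^{\lambda_{\chi}}$ lands on the norm-larger vector $\vec{n}$, whereas the statement places $\chi$ on $\vec{m}$ and $\psi$ on $\vec{n}$. Since matching $\chi$ to the $f_{\ell}^-$-index $\vec{m}$ and $\psi$ to the $\tp^{\ell}$-index $\vec{n}$ is exactly what the comparison with $\pi_{\kappa}\left(f_{\ell}^-\tp^{\ell}\right)$ requires in the first step of the construction, this assignment must be derived from the definition of $\sdf$, not merely flagged as something to be checked.
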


\begin{proof}
Note that if $\vec{a}-\vec{b} \in \N^{\ell}$, then
\begin{align*}
\norm{\vec{a}}^2-\norm{\vec{b}}^2 = \sum_{j=1}^{\ell} (a_j+b_j)(a_j-b_j) \geq 1.
\end{align*}
Conversely, suppose $\norm{\vec{a}}^2-\norm{\vec{b}}^2 \geq 1$. Recall that $n_j = (a_j+b_j)(a_j-b_j) \in \N$ for every $1 \leq j \leq \ell$ by definition of $f^+$, and $a_j+b_j$ is always positive. Thus, $(a_j-b_j) \geq 1$ for every $1 \leq j \leq \ell$, which proves the lemma.
\end{proof}

Hence, we achieve the following result by virtue of Proposition \ref{prop:hpcalc}.
\begin{cor} \label{cor:cond}
If $P_{\ell}$ is defined by the condition
\begin{align*}
\norm{\vec{b}}^{2(k_{f_{\ell}}-1)}\Pc_{\kappa-2}^{(1-k_{f_{\ell}},1-\kappa)}\left(1-2\frac{\norm{\vec{a}}^2}{\norm{\vec{b}}^2}\right)-\norm{\vec{a}}^{2(k_{f_{\ell}}-1)} = P_{\ell}\left(\norm{\vec{a}+\vec{b}},\norm{\vec{a}-\vec{b}}\right),
\end{align*}
then we have $\pi_{\kappa}\left(f_{\ell}\tp^{\ell}\right)(\tau) = 0$.
\end{cor}

\subsection{Second step}
We summarize the result of a standard calcualtion.
\begin{lemma}
We have 
\begin{align*}
\xi_{\kappa}\left(f_{\ell}\tp^{\ell}\right)(\tau) = -\frac{(4\pi)^{1-k_{f_{\ell}}}}{\Gamma\left(1-k_{f_{\ell}}\right)} v^{k_{\tp^{\ell}}} \theta_{\overline{\chi}}(\tau)^{\ell} \frac{\vt{\tp(\tau)}^{2\ell}}{\tp(\tau)^{\ell}}
\end{align*}
away from the zeros of $\tp$.
\end{lemma}

\begin{proof}
By definition and linearity of $\xi_{\kappa}$, it holds that
\begin{align*}
\xi_{\kappa}\left(f_{\ell}^- \tp^{\ell}\right)(\tau) = (\xi_{\kappa}f_{\ell}^-)(\tau)\cdot\overline{\tp(\tau)^{\ell}} + \overline{f_{\ell}^-(\tau)}\left(\xi_{\kappa}\tp^{\ell}\right)(\tau) = (\xi_{\kappa}f_{\ell}^-)(\tau)\cdot\overline{\tp(\tau)^{\ell}},
\end{align*}
where the last step uses that $\tp^{\ell}$ is holomorphic. Next, one computes\footnote{Compare the proof of \cite[Lemma 2.12]{mamoro} for some intermediate steps.}
\begin{align*}
(\xi_{\kappa}f_{\ell}^-)(\tau) = -\frac{(4\pi)^{1-k_{f_{\ell}}}}{\Gamma\left(1-k_{f_{\ell}}\right)} v^{k_{\tp^{\ell}}} \sum_{\vec{m} \in \N^{\ell}} \overline{\chi\left(\vec{m}!\right)} \left(\vec{m}!\right)^{\lambda_{\chi}} q^{\norm{\vec{m}}^2},
\end{align*}
from which we infer the claim. 
\end{proof}

Combining the previous result with the modularity of Shimura's theta function (see equation \eqref{eq:tpspaces}), and the fact that
\begin{align*}
\im\left(\gamma\tau\right) = \frac{v}{\vt{c\tau+d}^2}
\end{align*}
for every $\gamma = \left(\begin{smallmatrix} a & b \\ c & d\end{smallmatrix}\right) \in \slz$ and every $\tau \in \H$, we obtain the following corollary.
\begin{cor}
If $\chi \neq \Id$ then $\xi_{\kappa}\left(f_{\ell}\tp^{\ell}\right)$ is modular of weight
\begin{align*}
\ell\left(\frac{1}{2}+\lambda_{\overline{\chi}}\right) - \ell\left(\frac{1}{2}+\lambda_{\psi}\right)
\end{align*}
on $\Gamma_0(4M_{\chi}^2) \cap \Gamma_0(4 M_{\psi}^2)$ with Nebentypus $\overline{\chi} \cdot \left(\psi \cdot \chi_{-4}\right)^{-1}$.
\end{cor}

Thus, we stipulate $\psi$ to be odd, and $\chi$ to be even and non-trivial, getting
\begin{align*}
\kappa = 2-(-\ell) \in \Z_{\geq 2}, \qquad k_{f_{\ell}} = 2 - \frac{\ell}{2},
\end{align*}
as desired.

\subsection{Third step}
We verify the two remaining conditions of a polar harmonic Maa{\ss} form.
\begin{lemma} \label{lem:laststep}
Let $\tau \in \H$ with $\tp(\tau) \neq 0$. Then, the function $f_{\ell} = f_{\ell}^+ + f_{\ell}^-$ satisfies
\begin{align*}
0 = \Delta_{k_{f_{\ell}}} f_{\ell},
\end{align*}
and has the required growth property of a polar harmonic Maa{\ss} form.
\end{lemma}

\begin{proof}
The first assertion follows by construction of $f_{\ell}$. Since $\tp^{\ell}$ is of exponential decay towards all cusps, the function $f_{\ell}^+$ admits at most linear exponential growth towards all cusps. In particular, the cusp $i\infty$ is a removable singularity of $f^+$, because both numerator and denominator vanish at $i\infty$ of order $\ell$. In addition, the function $f_{\ell}^-$ decays exponentially towards $i\infty$, since the incomplete Gamma function does (and it dominates the powers of $q$). The transformation behaviour of $\theta_{\chi}$ under the full modular group $\slz$ implies that $f_{\ell}^-$ is of at most moderate growth towards all cusps. Indeed, choosing suitable scaling matrices yields additional factors of polynomial growth inside the Fourier expansion of $f_{\ell}^-$. This establishes the second assertion.
\end{proof}

\subsection{Conclusion}
We justify the application of Proposition \ref{prop:hpcalc}, which proves Theorem \ref{thm:main}.

\begin{proof}[Proof of Theorem \ref{thm:main}]
By definition, the Fourier coefficients of $\tp^{\ell}f_{\ell}^+$ expanded at $i\infty$ are of moderate growth, whence the growth of $\tp^{\ell}f_{\ell}^+$ towards any cusp has to be moderate. Consequently, the growth of $\tp^{\ell}f_{\ell}$ towards any cusp is moderate according to the proof of Lemma \ref{lem:laststep}. Thus, the assumptions in Proposition \ref{prop:hpcalc} are satisfied by $\tp^{\ell}f_{\ell}$. Performing the outlined steps concludes the proof of Theorem \ref{thm:main}.
\end{proof}

\section{Numerical examples}

\subsection{An interlude on Jacobi polynomials} \label{sec:jac}
The Jacobi polynomials $\Pc_r^{(a,b)}$ admit a representation in terms of of Gau{\ss}' hypergeometric function ${}_2F_1$, namely
\begin{align*}
\Pc_{r}^{(a,b)}(z) = \frac{\Gamma(a+r+1)}{r! \ \Gamma(a+1)} {}_2F_1\left(-r,a+b+r+1,a+1,\frac{1-z}{2}\right),
\end{align*}
for any $r \in \N$. This yields many identities between Jacobi polynomials of ``neighboring'' degree $r$ and parameters $a$, $b$, that is $r \in \{r-1,r,r+1\}$ and analogously for $a,b$. For instance, one could use Gau{\ss} contiguous relations, to obtain such identities. 

In particular, this leads to a recursive characterization of the Jacobi polynomials. More precisely, we have
\begin{align*}
\Pc_{0}^{(a,b)}(z) &= 1, \qquad \Pc_{1}^{(a,b)}(z) = \frac{1}{2}\left(a-b+(a+b+2)z\right), \\
c_1(j) \Pc_{j+1}^{(a,b)}(z) &= \left(c_2(j)+c_3(j)z\right)\Pc_{j}^{(a,b)}(z) - c_4(j)\Pc_{j-1}^{(a,b)}(z),
\end{align*}
where
\begin{align*}
c_1(j) &= 2(j+1)(j+a+b+1)(2j+a+b), \qquad c_2(j) = (2j+a+b+1)\left(a^2-b^2\right), \\
c_3(j) &= (2j+a+b)(2j+a+b+1)(2j+a+b+2), \\
c_4(j) &= 2(j+a)(j+b)(2j+a+b+2).
\end{align*}

\subsection{Explicit examples} \label{sec:exa}
Note that the parallelogram law and the fact $\vt{n} = \norm{\vec{a}+\vec{b}} \norm{\vec{a}-\vec{b}}$ yield
\begin{align*}
\norm{\vec{a}}^2 &= \frac{\norm{\vec{a}+\vec{b}}^2+\norm{\vec{a}-\vec{b}}^2}{4} + \frac{\norm{\vec{a}+\vec{b}} \norm{\vec{a}-\vec{b}}}{2}, \\
\norm{\vec{b}}^2 &= \frac{\norm{\vec{a}+\vec{b}}^2+\norm{\vec{a}-\vec{b}}^2}{4} - \frac{\norm{\vec{a}+\vec{b}} \norm{\vec{a}-\vec{b}}}{2}.
\end{align*}

The case $\ell = 2$ has to be excluded since $k_{f_{\ell}} \neq 1$.

\subsubsection{Higher even dimensions}
On one hand, if $\ell = 4$ for instance, we have
\begin{align*}
\kappa = 6, \qquad k_{f_4} = 0, \qquad \frac{\Pc_{4}^{(1,-5)}\left(1-2\frac{\norm{\vec{a}}^2}{\norm{\vec{b}}^2}\right)}{\norm{\vec{b}}^2} - \frac{1}{\norm{\vec{a}}^2} = \frac{\left(\norm{\vec{a}}^2-\norm{\vec{b}}^2\right)^5}{\norm{\vec{a}}^2\norm{\vec{b}}^{10}},
\end{align*} 
and thus, we choose the function $P_{4}$ as
\begin{align*}
P_{4}\left(\norm{\vec{a}+\vec{b}},\norm{\vec{a}-\vec{b}}\right) = \frac{\norm{\vec{a}-\vec{b}}^5 \norm{\vec{a}+\vec{b}}^5}{\left(\frac{\norm{\vec{a}+\vec{b}}^2+\norm{\vec{a}-\vec{b}}^2}{4} + \frac{\norm{\vec{a}+\vec{b}} \norm{\vec{a}-\vec{b}}}{2}\right) \left(\frac{\norm{\vec{a}+\vec{b}}^2+\norm{\vec{a}-\vec{b}}^2}{4} - \frac{\norm{\vec{a}+\vec{b}} \norm{\vec{a}-\vec{b}}}{2}\right)^5}.
\end{align*}
Similarly, we compute (with $x \coloneqq \norm{\vec{a}}$, $y \coloneqq \norm{\vec{b}}$)
\begin{align*}
y^{-4} \ \Pc_{6}^{(2,-7)}\left(1-2\frac{x^2}{y^2}\right) - x^{-4} &= \frac{\left(x^2-y^2\right)^7}{x^4 y^{16}} \left(7x^2+y^2\right), \\
y^{-6} \ \Pc_{8}^{(3,-9)}\left(1-2\frac{x^2}{y^2}\right) - x^{-6} &= \frac{\left(x^2-y^2\right)^9}{x^6 y^{22}} \left(45x^4+9x^2y^2+y^4\right), \\
y^{-8} \ \Pc_{10}^{(4,-11)}\left(1-2\frac{x^2}{y^2}\right) - x^{-8} &= \frac{\left(x^2-y^2\right)^{11}}{x^8 y^{28}} \left(286x^6+66x^4y^2+11x^2y^4+y^6\right),
\end{align*}
from which we read off the corresponding definitions of $P_{\ell}$. 

Because of the aforementioned recursive nature of the Jacobi polynomials, the indicated pattern continues to hold for every even dimension $\ell \in 2\N+2$ by induction.

\subsubsection{Higher odd dimensions}
On the other hand, the case of dimension $\ell \in 2\Z_{\geq 2} - 1$ produces more complicated functions $P_{\ell}$. For example, if $\ell = 3$ we have
\begin{align*}
& \kappa = 5, \qquad k_{f_3} = \frac{1}{2}, \\ 
&{\scriptstyle \frac{\Pc_{3}^{\left(\frac{1}{2},-4\right)}\left(1-2\frac{\norm{\vec{a}}^2}{\norm{\vec{b}}^2}\right)}{\norm{\vec{b}}} - \frac{1}{\norm{\vec{a}}} = -\frac{\left(\norm{\vec{a}}-\norm{\vec{b}}\right)^4\left(5\norm{\vec{a}}^3+20\norm{\vec{a}}^2\norm{\vec{b}}+29\norm{\vec{a}}\norm{\vec{b}}^2+16\norm{\vec{b}}^3\right)}{16\norm{\vec{a}}\norm{\vec{b}}^7} },
\end{align*}
and if $\ell = 5$, we have
\begin{align*}
& \kappa = 7, \qquad k_{f_5} = -\frac{1}{2}, \\ 
& {\scriptstyle \frac{\Pc_{5}^{\left(\frac{3}{2},-6\right)}\left(1-2\frac{\norm{\vec{a}}^2}{\norm{\vec{b}}^2}\right)}{\norm{\vec{b}}^3} - \frac{1}{\norm{\vec{a}}^3} } \\ 
& {\scriptstyle =  \frac{-693\norm{\vec{a}}^{13}+4095\norm{\vec{a}}^{11}\norm{\vec{b}}^2-10010\norm{\vec{a}}^9\norm{\vec{b}}^4+12870\norm{\vec{a}}^7\norm{\vec{b}}^6-9009\norm{\vec{a}}^5\norm{\vec{b}}^8+3003\norm{\vec{a}}^3\norm{\vec{b}}^{10}-256\norm{\vec{b}}^{13}}{256\norm{\vec{a}}^3\norm{\vec{b}}^{13}} }.
\end{align*}
We observe that we are left with odd powers of $\norm{\vec{a}}$, $\norm{\vec{b}}$ in both odd-dimensional cases. If we keep the dependence of $P_{\ell}$ on $\norm{\vec{a}\pm\vec{b}}$, which ultimately justifies the terminology ``divisor function'', then odd powers obstruct a definition of $P_{\ell}$ via the parallelogram law in these cases of $\ell$. Once more, an inductive argument via the recursive characterization of the Jacobi polynomials extends this phenomenon to all odd dimensions $\ell \in 2\N + 1$.

\bigskip

\begin{bibsection}
\begin{biblist}
\bib{mamoro}{webpage}{
    title={Polar harmonic Maa{\ss} forms and holomorphic projection},
    author={Males, J.},
    AUTHOR = {Mono, A.},
    AUTHOR = {Rolen, L.},
    year={2020},
	url={https://arxiv.org/abs/2009.04955v2},
	note={preprint},
}

\bib{mertens2019mock}{article}{
   author={Mertens, M. H.},
   author={Ono, K.},
   author={Rolen, L.},
   title={Mock modular Eisenstein series with Nebentypus},
   journal={Int. J. Number Theory},
   volume={17},
   date={2021},
   number={3},
   pages={683--697},
}
\end{biblist}
\end{bibsection}

\end{document}